\newtheorem{theorem}{Theorem}[section]
\newtheorem{corollary}{Corollary}[theorem]
\newtheorem{lemma}[theorem]{Lemma}
\theoremstyle{definition}
\theoremstyle{remark}
\numberwithin{equation}{section}
\begin{document}
\

\title{Subtree Size in Various Planar Trees}

\author{Anthony Van Duzer}
\address{Department of Mathematics, University of Florida, Gainesville, Florida 32601}
\curraddr{Department of Mathematics,
University of Florida, Gainesville, Florida 32611}
\email{avanduzer@ufl.edu}



\keywords{Enumerative Combinatorics, Motzkin trees}

\begin{abstract}
In this paper we find the generating function for the number of vertices which have $k$ elements in their subtree and use this generating function to calculate the probability that a vertex has a size $k$ subtree.  We also show how this same technique can be applied to calculate the probabilities for other trees and specifically apply it to $4$ different types of trees.
\end{abstract}

\maketitle

\section{Introduction}
In the past few decades there have been several papers looking at various statistics related to vetices in both labelled and unlabelled trees.  The traditional method of doing this was to get a generating function for the number of trees where the root has the property you are intereseted in and once you have that you can use it to find the number of vertices have the property you are intersted in.  We will apply this technique to 2 relatively novel vertex statistics.  Specifically we will look at the probability a vertex has $k$ leaves in its subtree and the probability a vertex has $k$ vertices in its subtrees.
\vspace{.1in}
\newline
We will do this by finding the ordinary generating function, in this paper generating function will always refer to ordinary genterating functions of the form $A(x)=\displaystyle\sum_{n=0}^\infty a(n) x^n,$ for the total number of vertices that have that property and use that to calculate the asymptotic behavior of $a(n)$ and compare that to the total number of vertices in all trees of size $n$ as $n$ goes to $\infty$.  
\vspace{.1in}
\newline
To find the generating function for the total number of vertices with a given property we will first need to find the generating function for the number of trees where the root has that specific property.  Once we do that we can then construct the generating function we are interested in. For all of the trees in this paper the generating function for one of the two classes we are interested in will be relatively straight forward to calculate.  For trees counted by the number of vertices the generating function for the number of trees where the root has $n$ vertices in its subtree is just $A_n x^n$ where $A_n$ is the total number of trees of size $n$.  A similar relation holds for trees which are counted by the number of leaves and the generating function for the number of trees where the root has $n$ leaves.That leaves $2$ casess, the case where the trees are counted by the number of leaves and we are interested in the generating function for the number of trees with $n$ vertices and the case where the tree are counted by the number of vertices and we wish to know the generating function for the number of trees with $n$ leaves. To calculate these generating function we will construct a bivariate generating function where $x^n$ is indexed by vertices and $y^n$ is indexed by leaves and we will look at the coefficent of $x^k$ if we are interested in vertices and $y^k$ if we are interested in leaves.  This will just be a generating function in $y$ or $x$ respectively.  For the case of trees counted by the number of leaves we will also have to calculate the total number of vertices.

\section{Motzkin Trees}
The first example we will look at are Motzkin trees.  A Motzkin tree is a rooted planar tree where each non-leaf vertex has either one or two children.  These are also occasionally referred to as unary-binary trees or 1-2 trees.
\begin{theorem}
Let $M(x)=\displaystyle\sum_{n=0}^\infty m(n) x^n$ be the generating function for the number of Motzkin Trees.  We have that \begin{equation}M(x)=\dfrac{1-x-\sqrt{1-2x-3x^2}}{2x}.\end{equation}
\end{theorem}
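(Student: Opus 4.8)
The plan is to derive a functional equation for $M(x)$ directly from the recursive structure of Motzkin trees and then solve it. A Motzkin tree is either a single leaf (the root has no children), or a root with exactly one child which is itself a Motzkin tree, or a root with exactly two ordered children each of which is a Motzkin tree. Translating this combinatorial decomposition into generating functions, with $x$ marking vertices, gives the equation
\begin{equation}
M(x) = 1 + x\,M(x) + x\,M(x)^2,
\end{equation}
where the $1$ accounts for the single-leaf tree, the term $x\,M(x)$ for the unary case (one new vertex plus a subtree), and $x\,M(x)^2$ for the binary case (one new vertex plus two subtrees). I would include a brief justification that this symbolic equation is valid, e.g. by appealing to the standard dictionary between admissible combinatorial constructions (disjoint union, product, marking an atom) and operations on ordinary generating functions.

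Next I would solve the quadratic equation $x\,M^2 + (x-1)M + 1 = 0$ for $M = M(x)$ using the quadratic formula, obtaining
\begin{equation}
M(x) = \frac{(1-x) \pm \sqrt{(1-x)^2 - 4x}}{2x} = \frac{1 - x \pm \sqrt{1 - 2x - 3x^2}}{2x}.
\end{equation}
Then I would select the correct branch. The key observation is that $M(x)$ must be a power series analytic at $x=0$ with $M(0) = m(0) = 1$ (there is exactly one Motzkin tree with one vertex, the single leaf). The plus sign forces the numerator to behave like $2 - 2x$ near $x = 0$, giving $M(x) \sim 1/x \to \infty$, which is impossible; the minus sign makes the numerator vanish at $x = 0$, and a quick expansion (or L'Hôpital) shows the limit is $1$, matching $m(0)$. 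Hence the minus branch is the right one, yielding the claimed formula.

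The main obstacle, such as it is, is not the algebra but making the branch selection rigorous: one should confirm that the chosen expression genuinely has a Taylor expansion at the origin with nonnegative integer coefficients (so that it is the generating function of an actual combinatorial class) rather than merely satisfying the functional equation formally. I would address this by noting that $1 - 2x - 3x^2 = (1-3x)(1+x)$ is $1$ at $x=0$, so $\sqrt{1-2x-3x^2}$ is analytic near $0$ with value $1$ there, and that the functional equation $M = 1 + xM + xM^2$ has a unique formal power series solution (determined coefficient by coefficient, since the coefficient of $x^n$ on the right depends only on lower-order coefficients of $M$); uniqueness then guarantees that the minus-branch closed form agrees with that solution. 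This completes the proof.
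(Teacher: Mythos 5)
Your overall strategy (decompose at the root, obtain a quadratic functional equation, solve it, pick the branch analytic at the origin, and invoke uniqueness of the formal power series solution) is exactly the paper's approach, and the uniqueness remark at the end is a nice touch the paper omits. However, the functional equation you start from is wrong, and the algebra that follows contains a compensating slip, so the derivation as written does not establish the stated formula. Since $x$ marks \emph{vertices} and the single-leaf tree has one vertex, the leaf case must contribute $x$, not $1$; the correct equation (and the one the paper uses) is $M(x)=x+xM(x)+xM(x)^2$. Note that the claimed closed form has no constant term: its numerator is $2x^2+O(x^3)$, so $M(x)=x+x^2+2x^3+\cdots$ and $m(0)=0$. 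This is consistent with $M=x+xM+xM^2$ but not with your $M=1+xM+xM^2$, whose unique power series solution has constant term $1$.

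The slip that hides this is in the discriminant: for $xM^2+(x-1)M+1=0$ you correctly write $\sqrt{(1-x)^2-4x}$, but $(1-x)^2-4x=1-6x+x^2$, not $1-2x-3x^2$. The expression $1-2x-3x^2$ equals $(1-x)^2-4x^2$, which is the discriminant of the correct equation $xM^2+(x-1)M+x=0$. Relatedly, your branch-selection check asserts that the minus branch tends to $1$ at $x=0$; for the formula actually being proved it tends to $0$. With the corrected functional equation everything else you wrote goes through essentially verbatim: the minus sign is still forced, now because the numerator must vanish to second order at the origin so that the quotient is a power series beginning with $x$, and the uniqueness argument then correctly identifies the closed form with the combinatorial generating function.
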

\begin{proof}
This comes from the relationship $M(x)=x+xM(x)+xM(x)^2$, which is based on the fact the root can be a leaf, the parent of a single child, or the parent of two children and each child of the root would be another Motzkin tree.  Given that relationship we can use the quadratic formula to arrive at the desired generating function.
\end{proof}
\subsection{$k$ verties in the subtree}
We will first handle the case for $k$ vertices in the subtree.
\begin{theorem}
The generating function for the number of vertices that have $k$ vertices in their subtree in all trees with $n$ vertices is given by \begin{equation}V_k(x)=\dfrac{R_k(x)}{\sqrt{1-2x-3x^2}}\end{equation} where $V_k(x)$ is the generating function for all vertices with $k$ vertices in their subtree in all Motzkin tree with $n$-vertices and $R_k(x)$ is the number of trees where the root has $k$ vertices in its subtree.
\end{theorem}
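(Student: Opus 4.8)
The plan is to reconstruct a vertex with a size-$k$ subtree from two independent pieces of data: the subtree hanging below it, which is an arbitrary Motzkin tree on $k$ vertices, and the ambient tree with that subtree contracted to a single leaf. Precisely, given a Motzkin tree $T$ and a vertex $v$, let $T_v$ be the subtree rooted at $v$, and let $S$ be the Motzkin tree obtained from $T$ by deleting all of $T_v$ and putting a single leaf in its place. Then $S$ carries a distinguished leaf (the former location of $v$), $T_v$ is an arbitrary Motzkin tree, and $|T| = |S| + |T_v| - 1$. Conversely, grafting any Motzkin tree $U$ onto the marked leaf of any leaf-marked Motzkin tree produces a Motzkin tree $T$ together with a distinguished vertex $v$, namely the root of the grafted copy of $U$. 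First I would verify that these two operations are mutually inverse, so that pointed Motzkin trees are in bijection with pairs (leaf-marked Motzkin tree, Motzkin tree), and that under this bijection the event that $v$ has exactly $k$ vertices in its subtree corresponds to the condition $|U| = k$.

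Next I would pass to generating functions. Let $L(x) = \sum_n \ell(n) x^n$ be the generating function for leaf-marked Motzkin trees counted by number of vertices, so that $\ell(n)$ is the total number of leaves over all Motzkin trees with $n$ vertices. Since $|T| = |S| + |U| - 1$, the monomial $x^{|T|}$ factors as $x^{|S| - 1} \cdot x^{|U|}$, and summing over all leaf-marked $S$ and all $U$ with $|U| = k$ gives
\[
V_k(x) \;=\; \frac{L(x)}{x}\, R_k(x),
\]
because the $U$'s here range over all Motzkin trees on $k$ vertices --- equivalently, over all trees whose root has $k$ vertices in its subtree --- whose generating function is exactly $R_k(x)$.

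It then remains to identify $L(x)/x$ with $1/\sqrt{1 - 2x - 3x^2}$. Peeling off the root of a leaf-marked Motzkin tree yields the functional equation $L(x) = x + x L(x) + 2x M(x) L(x)$: either the root is itself the marked leaf (the one-vertex tree), or the root is unary and the mark lies inside its child's subtree, or the root is binary and the mark lies inside one of its two subtrees while the other is an unmarked Motzkin tree. (Equivalently, this equation is what one gets by differentiating $M(x,u) = xu + xM(x,u) + xM(x,u)^2$ in $u$ and setting $u = 1$.) Hence $L(x) = x / \bigl(1 - x - 2xM(x)\bigr)$, and from the closed form for $M(x)$ we have $2xM(x) = 1 - x - \sqrt{1 - 2x - 3x^2}$, so $1 - x - 2xM(x) = \sqrt{1 - 2x - 3x^2}$ and $L(x) = x / \sqrt{1 - 2x - 3x^2}$. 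Substituting into the display above completes the proof.

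The step I expect to be the main obstacle is the first one: checking that contracting $T_v$ to a leaf really is a bijection onto (leaf-marked tree, tree) pairs --- in particular that grafting an arbitrary Motzkin tree back onto a marked leaf always yields a legitimate Motzkin tree and recovers the marked vertex --- and keeping the size bookkeeping $|T| = |S| + |T_v| - 1$ exact, since that off-by-one, absorbed by the factor $1/x$, is precisely what makes the denominator collapse to $\sqrt{1 - 2x - 3x^2}$ rather than something messier. Once the bijection and the value of $L(x)$ are in place, the remainder is a one-line computation.
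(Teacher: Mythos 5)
Your argument is correct, but it takes a genuinely different route from the paper's. The paper writes a recurrence directly for $V_k(x)$ by decomposing the ambient tree at its root: the root itself contributes $R_k(x)$, and otherwise the marked vertex lies in the subtree of one of the root's one or two children, giving $V_k(x)=R_k(x)+xV_k(x)+2xV_k(x)M(x)$, which is then solved using the closed form of $M(x)$. You instead factor the pointed structure globally: contracting the marked vertex's subtree to a leaf gives a bijection between pairs (tree, vertex with a size-$k$ subtree) and pairs (leaf-marked tree, tree on $k$ vertices), so $V_k(x)=\bigl(L(x)/x\bigr)R_k(x)$, and you then determine the leaf-counting series $L(x)$ by a root decomposition. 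The two computations are algebraically isomorphic --- your functional equation $L(x)=x+xL(x)+2xM(x)L(x)$ is exactly the paper's recurrence with $R_k(x)$ replaced by $x$ --- but your version explains combinatorially why $V_k(x)$ is $R_k(x)$ times a factor independent of $k$, and identifies that factor $1/\sqrt{1-2x-3x^2}$ as $L(x)/x$, i.e.\ as counting the possible attachment sites for the size-$k$ subtree. That $k$-independent factorization is precisely what the paper later exploits via Bender's lemma, and your bijection transfers verbatim to the other tree families treated there; the paper's direct recurrence is simply shorter because it skips the bijection. The step you flagged as a possible obstacle is in fact unproblematic: replacing a leaf by an arbitrary Motzkin tree preserves the child-count condition at every vertex, so grafting and contraction are mutually inverse, and the bookkeeping $|T|=|S|+|U|-1$ is exact.
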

\begin{proof}
This follows from the recurrence relationship \begin{equation}V_k(x)=R_k(x)+xV_k(x)+2xV_k(x)M(x).\end{equation}  To see why this recurrence relationship holds consider the root.  The root contributres $R_k(x)$ to the generating function, by definition.  Now that we have handled the root consider what happens when we remove the root.  The root will either have 1 or 2 children.  If it has 1 child than that child will be a Motzkin tree and the generating function for the number of vertices with $k$ vertices in their subtree is simply $V_k(x)$ and we multiply by $x$ to account for the root.  Otherwise it will break into 2 Motzkin trees.  This contributes the $2xV_k(x)M(x)$ term.  To see why fix a Motzkin tree on the left subtree and color one of the vertices with $k$ vertices in its subtree red.  We want to see what that contributes to the generating function.  If the left subtree is of size $m$ and the right subtree is of size $r$ the vertex will contribute $x^{m+1}x^{r}$ to the generating function the $m+1$ because of the root and the $x^r$ to account for the size of the right subtee for every occurence.  Now we have to see how often this $1$ vertex will appear.  The right subtree is simply another Motzkin tree so this one vertex will contribute $x^{m+1}M(x)$ to the generating function.  We now have to sum over all possible configurations and vertices we could chose on the left side which gives us the term $xV_k(x)M(x)$ and finally the 2 comes from the fact we have to do the same thing to account for the right subtree.
\vspace{.1in}
\newline
Plugging in the known $M(x)$ and solving for $V_k(x)$ gives us the desired fucntion.
\end{proof}
Now that we have that relationship we merely need to find $R_k(x)$ but since Motzkin trees are counted by the number of vertices then the number of trees where the root has $k$ vertices in its subtree is simply $M_k$ and they all have size $k$ so $R_k(x)=M_kx^k$.  To go from these generating functions to probabilities we will need to use Bender's lemma.
\begin {lemma} (Bender's Lemma) \cite{D}
\newline
Take generating functions $A(x)=\sum a_nx^n$ and $B(x)=\sum b_n x^n$ with radius of convergence $\alpha >\beta \geq 0$ where $\alpha$ goes with $A(x)$ and $\beta$ goes with $B(x)$.  If $\frac{b_{n-1}}{b_n}$ approaches a limit b as n approaches infinity and $A(b) \neq 0$ then $c_n \sim A(b)b_n$ where $\sum c_nx^n$=A(x)B(x).
\end {lemma}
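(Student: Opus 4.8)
The plan is to argue directly from the definition of the Cauchy product $c_n = \sum_{k=0}^{n} a_k b_{n-k}$, aiming at the sharper statement $\lim_{n\to\infty} c_n/b_n = A(b)$, which together with $A(b)\neq 0$ is exactly the claimed asymptotic. First I would observe that the hypothesis $b_{n-1}/b_n\to b$ forces $\lvert b\rvert$ to equal the radius of convergence $\beta$ of $B$ (indeed $\lvert b_n/b_{n-1}\rvert\to 1/\lvert b\rvert$ implies $\lvert b_n\rvert^{1/n}\to 1/\lvert b\rvert$), so in particular $\lvert b\rvert<\alpha$. Dividing the Cauchy product by $b_n$ (which is nonzero for $n$ large, else the ratio would be undefined), the problem becomes the interchange of $\lim_{n\to\infty}$ with the sum in
\[
\frac{c_n}{b_n} \;=\; \sum_{k=0}^{n} a_k\,\frac{b_{n-k}}{b_n}.
\]

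Next I would assemble the two inputs for a dominated‑convergence argument. For every fixed $k$, telescoping the ratio gives
\[
\frac{b_{n-k}}{b_n} \;=\; \prod_{j=1}^{k}\frac{b_{n-j}}{b_{n-j+1}} \;\longrightarrow\; b^{k}
\]
as $n\to\infty$, since each of the $k$ factors tends to $b$. The crux is a uniform bound: for any $r$ with $\lvert b\rvert<r<\alpha$ (such $r$ exists because $\lvert b\rvert=\beta<\alpha$), there is a constant $C=C(r)$ with $\lvert b_{n-k}/b_n\rvert\le C r^{k}$ for all $0\le k\le n$. When $n-k$ is large the telescoped product is itself $\le r^{k}$, because every factor is eventually $\le r$ in absolute value; for the finitely many small values of $n-k$ one bounds $\lvert b_{n-k}\rvert$ by a constant while bounding $\lvert b_n\rvert$ from below using the ratio hypothesis again, and absorbs the discrepancy into $C$. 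This uniform estimate is the step I expect to be the main obstacle, since it requires a brief case split according to whether $\lvert b\rvert<1$, $\lvert b\rvert=1$, or $\lvert b\rvert>1$ (i.e.\ whether $b_n$ decays, is subexponential, or grows).

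Finally, fixing such an $r$, the series $\sum_k \lvert a_k\rvert r^{k}$ converges because $A$ has radius of convergence $\alpha>r$, so $C\lvert a_k\rvert r^{k}$ dominates the summands $a_k\,b_{n-k}/b_n$ uniformly in $n$; Tannery's theorem (dominated convergence for series) then yields
\[
\lim_{n\to\infty}\frac{c_n}{b_n} \;=\; \sum_{k=0}^{\infty} a_k b^{k} \;=\; A(b),
\]
the last equality since $\lvert b\rvert<\alpha$. One can sidestep Tannery by splitting the sum at a fixed $K$: the head $\sum_{k=0}^{K}a_k\,b_{n-k}/b_n$ tends to $\sum_{k=0}^{K}a_k b^{k}$ as $n\to\infty$, while the tail is at most $C\sum_{k>K}\lvert a_k\rvert r^{k}$ uniformly in $n$, which vanishes as $K\to\infty$. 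Since $A(b)\neq 0$, the limit relation $c_n/b_n\to A(b)$ is precisely $c_n\sim A(b)\,b_n$, completing the proof.
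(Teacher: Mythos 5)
Your argument is correct, but note that the paper itself offers no proof of this lemma at all: it is quoted as a known result with a citation to Flajolet--Sedgewick, so there is no internal proof to compare against. What you have written is essentially the standard textbook proof from the cited literature: divide the Cauchy product $c_n=\sum_{k=0}^n a_k b_{n-k}$ by $b_n$, use the telescoped ratios to get $b_{n-k}/b_n\to b^k$ for fixed $k$, establish the uniform bound $\lvert b_{n-k}/b_n\rvert\le Cr^k$ for $\lvert b\rvert<r<\alpha$, and conclude by Tannery's theorem (or the head/tail split at a fixed $K$, which is how it is usually presented). The one step you flag as the ``main obstacle'' --- the uniform bound --- does go through cleanly and does not really need a case split on $\lvert b\rvert$: for $k\le n-N$ every factor of the telescoped product is at most $r$ in absolute value, and for the finitely many remaining $k$ one has $\lvert b_{n-k}\rvert\le M$ and $\lvert b_n\rvert\ge \lvert b_N\rvert r^{-(n-N)}$, whence $\lvert b_{n-k}/b_n\rvert\le (M/\lvert b_N\rvert)r^{n-N}\le C r^k$, the last inequality holding whether $r\le 1$ (since $k\le n$) or $r>1$ (since $k>n-N$). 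Your observation that the ratio hypothesis forces $\lvert b\rvert=\beta$ is also correct and worth making explicit, since it is what guarantees $\lvert b\rvert<\alpha$ and hence the convergence of $\sum_k\lvert a_k\rvert r^k$. In short: the proposal is a complete and correct proof of a statement the paper merely imports.
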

When we apply Bender's lemma we have that $A(x)=R_k(x)$ and \begin{equation}B(x)=\dfrac{1}{\sqrt{1-2x-3x^2}}.\end{equation}  With some algebra we see that $B(x)$ has a radius of convergence of $\frac{1}{3}.$  We have that  $R_k(x)$ is a non-zero polynomial in all these cases so has an infinite radius of convergence and hence we can apply Bender's lemma.
\begin{center}
 \begin{tabular}{|c | c| c|} 
 \hline
k & $R_k(x)$ & Probability the subtree\\&& has $k $ vertices $\approx$\\ [.5ex] 
\hline
 1 & $x$ & 0.33333333\\ 
\hline
 2 & $2x^2$ & 0.22222222 \\
\hline
3 & $4x^3$ &  0.14814815\\
\hline
4 & $9x^4$ &  0.11111111\\
\hline
5 & $21x^5$ & 0.086419753\\
\hline
6 & $51x^6$ &  0.069958848\\
\hline
\end{tabular}
\end{center}
\subsection{$k$ leaves in the subtree}
Next we will handle the question of $k$ leaves in the subtree.  This case is slightly more complicated to handle.
\begin{lemma}
Letting $L_k(x)=$ the generating function for the number of vertices in all Motzkin trees with $n$ vertices which have $k$ leaves in their subtree we  have \begin{equation}L_k(x)=\dfrac{R'_k(x)}{\sqrt{1-2x-3x^2}}\end{equation} where $R'_k(x)$ is the generating function for the number of roots which have $k$ leaves in their subtree.
\end{lemma}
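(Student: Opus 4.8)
The plan is to imitate, almost verbatim, the proof of the analogous statement for $V_k(x)$, since the decomposition of a Motzkin tree at its root is insensitive to whether we track the number of vertices or the number of leaves in a vertex's subtree. The only feature of the statistic that the argument uses is that, for a non-root vertex $v$, the number of leaves lying in the subtree rooted at $v$ is determined entirely by that subtree, and that subtree is contained in one of the (at most two) subtrees hanging off the root.

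First I would establish the recurrence
\begin{equation}
L_k(x) = R'_k(x) + xL_k(x) + 2xL_k(x)M(x).
\end{equation}
The term $R'_k(x)$ accounts for the root itself: by definition it is the generating function, weighted by the total number of vertices, for those Motzkin trees whose root has $k$ leaves in its subtree. For the remaining vertices, delete the root. If the root has a single child, that child spans a Motzkin tree, the red-marked vertex with $k$ leaves below it lives inside it and is counted by $L_k(x)$, and we multiply by $x$ for the deleted root; this gives $xL_k(x)$. If the root has two children, the marked vertex lies in one of the two pendant Motzkin trees --- contributing $L_k(x)$ --- while the other pendant tree is an unconstrained Motzkin tree contributing $M(x)$, together with $x$ for the root and a factor $2$ for the choice of which side carries the marked vertex; this gives $2xL_k(x)M(x)$. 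Summing over all configurations yields the displayed recurrence, exactly as in the proof of the $V_k$ theorem.

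Next I would solve the recurrence as a linear equation in $L_k(x)$, obtaining
\begin{equation}
L_k(x) = \frac{R'_k(x)}{1 - x - 2xM(x)},
\end{equation}
and then substitute $M(x) = \dfrac{1-x-\sqrt{1-2x-3x^2}}{2x}$ from the first theorem. A short simplification gives $1 - x - 2xM(x) = \sqrt{1-2x-3x^2}$, which produces the claimed identity.

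There is essentially no obstacle in the lemma as stated; the genuine work is pushed into determining $R'_k(x)$ itself, i.e.\ counting Motzkin trees by vertices and leaves simultaneously and extracting the coefficient of $y^k$ from the resulting bivariate generating function. That computation, together with the subsequent application of Bender's Lemma --- which requires the radius of convergence of $\dfrac{1}{\sqrt{1-2x-3x^2}}$, already seen to be $\dfrac{1}{3}$, and the fact that $R'_k$ is a nonzero polynomial --- is where the remaining care is needed and is handled separately.
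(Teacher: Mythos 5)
Your proposal is correct and follows exactly the route the paper intends: the paper itself dispenses with this lemma by noting the proof is identical to the $k$-vertices case, and you have simply written out that identical argument (the recurrence $L_k=R'_k+xL_k+2xL_kM$, solved for $L_k$, with $1-x-2xM(x)=\sqrt{1-2x-3x^2}$). No gaps; you have merely supplied details the paper leaves implicit.
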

The proof of this is identical to the proof we had for $k$ vertices, and we simply need to find the generating function for the number of roots which have $k$ leaves.  To find $R'_k(x)$ we will need to set up a bivariate generating function,
\begin{lemma}
Let $M(x,y)$ be the bivariate generating functions for Motzkin trees where the number of vertices is indexed by $x$ and the number of leaves is indexed by $y$.   We have that \begin{equation}M(x,y)=\dfrac{1-x-\sqrt{1-2x+x^2-4x^2y}}{2x}.\end{equation}
\end{lemma}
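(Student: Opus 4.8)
The plan is to follow the proof of Theorem~2.1 essentially verbatim, but with an extra variable $y$ tracking leaves. First I would derive the functional equation from the same three-way decomposition of a Motzkin tree. A Motzkin tree is built in exactly one of three mutually exclusive ways: it is a single vertex, which is simultaneously the root and a leaf and so contributes $xy$; or its root has a single child, which is itself an arbitrary Motzkin tree, contributing $xM(x,y)$ (the factor $x$ for the non-leaf root); or its root has two children, each an arbitrary Motzkin tree, contributing $xM(x,y)^2$. Summing the three cases gives
\begin{equation}
M(x,y) = xy + xM(x,y) + xM(x,y)^2.
\end{equation}

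Next I would treat this as a quadratic in $M(x,y)$. Rewriting it as $xM^2 + (x-1)M + xy = 0$ and applying the quadratic formula yields
\begin{equation}
M(x,y) = \frac{1 - x \pm \sqrt{(1-x)^2 - 4x^2 y}}{2x} = \frac{1 - x \pm \sqrt{1 - 2x + x^2 - 4x^2 y}}{2x}.
\end{equation}
Finally I would fix the sign. Since $M(x,y)$ must be a formal power series, it has to be finite at $x = 0$; the $+$ branch blows up like $1/x$ as $x \to 0$ and is spurious, while for the $-$ branch the numerator vanishes at $x = 0$, so it expands as a genuine power series with $M(0,y) = 0$ and coefficient of $x$ equal to $y$ (the unique one-vertex tree, which is a leaf), as required. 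Hence the $-$ sign is correct, giving the stated formula. As a consistency check, setting $y = 1$ recovers the generating function $M(x)$ of Theorem~2.1.

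The only genuinely subtle point — the place where this differs from Theorem~2.1 — is the combinatorial bookkeeping in the base case: one must record that the single-vertex tree contributes $xy$, not $x$, because that vertex is a leaf, and that the two recursive cases attach no extra power of $y$ to the root, which is not a leaf there. Everything after the functional equation is the same routine quadratic-formula-and-branch-selection argument already used above, so I expect no real difficulty there.
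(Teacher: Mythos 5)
Your proposal is correct and follows the paper's own proof exactly: the same functional equation $M(x,y)=xy+xM(x,y)+xM(x,y)^2$ from the three-way root decomposition, solved by the quadratic formula. The extra care you take with branch selection and the $y=1$ consistency check only adds detail the paper omits.
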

\begin{proof}
We have the relationship \begin{equation}M(x,y)=xy+xM(x,y)+xM^2(x,y)\end{equation} and we simply solve for $M(x,y)$ using the quadratic formula.
\end{proof}
Now that we have that bivariate generating function to find the number of trees where the root has $k$ leaves we simply need to extract the coefficient of $y^k$ from the generating function and using that and Bender's Lemma we can construct the following table.
\begin{center}
 \begin{tabular}{|c | c| c|} 
 \hline
k & $R'_k(x)$ & Probability the subtree\\&& has $k $ leaves $\approx$\\ [.5ex] 
\hline
 1 & $\frac{x}{1-x}$ & 0.5\\ [1ex]
\hline
 2 & $\dfrac{x^3}{(1-x)^3}$ & 0.125 \\ [1ex]
\hline
3 & $\dfrac{2x^5}{(1-x)^5}$ &  0.0625\\ [1ex]
\hline
4 & $\dfrac{5x^7}{(1-x)^7}$ &  0.0391\\ [1ex]
\hline
5 & $\dfrac{14x^9}{(1-x)^9}$ & 0.02734\\ [1ex]
\hline
6 & $\dfrac{42x^{11}}{(1-x)^{11}}$ &  0.02051\\ [1ex]
\hline 
\end{tabular}
\end{center}
We see that all of the $R'_k(x)$  have a radius of convergence of 1 so Bender's lemma applies in this case.
\section{Ordered Trees} 
The second example we will look at is ordered trees with no degree restrction counted by the number of vertices.  The first thing we need to do is find the total number of vertices in all ordered trees of size $n$.  This is an incredibly well studied class of trees and we know the class is enumerated by the Catalan numbers.
\begin{theorem}
The generating function for the number of trees \begin{equation}T(x)=\dfrac{1-\sqrt{1-4x}}{2}\end{equation}
\end{theorem}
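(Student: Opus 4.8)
The plan is to derive a functional equation for $T(x)$ from the recursive structure of ordered trees and then solve it with the quadratic formula, in exactly the same spirit as the proof of the Motzkin generating function above. First I would recall the standard decomposition of the class: an ordered tree is a root together with an ordered, possibly empty, list of subtrees, each of which is itself an ordered tree. The root contributes one vertex, hence a factor of $x$, and a sequence of zero or more objects enumerated by $T(x)$ is enumerated by $\frac{1}{1-T(x)}$; this substitution is legitimate as an identity of formal power series because $T(x)$ has zero constant term, so $1-T(x)$ is a unit in the ring of formal power series. This decomposition yields
\begin{equation}
T(x)=\frac{x}{1-T(x)}.
\end{equation}

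Next I would clear the denominator to obtain the quadratic relation $T(x)^2-T(x)+x=0$ and apply the quadratic formula, giving $T(x)=\frac{1\pm\sqrt{1-4x}}{2}$. The branch is pinned down by the boundary condition $T(0)=0$, which holds because every ordered tree has at least one vertex: the $+$ branch would force $T(0)=1$, so we must take the $-$ branch, and therefore
\begin{equation}
T(x)=\frac{1-\sqrt{1-4x}}{2}.
\end{equation}

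The only point needing any care is the justification that the geometric-series substitution is valid and that the algebraic equation has a unique power-series solution, both of which follow from $T(0)=0$. As a sanity check one can expand $\sqrt{1-4x}$ by the binomial series and recover $[x^n]T(x)=C_{n-1}$, the $(n-1)$st Catalan number, confirming that $T(x)$ indeed enumerates ordered trees by number of vertices and supplying the total vertex count that the remainder of the section will require. (Alternatively, since the class is classical, one could simply cite the known Catalan enumeration; I include the short derivation for completeness.) I do not anticipate a genuine obstacle here — this is the warm-up computation on which the rest of the ordered-tree section is built.
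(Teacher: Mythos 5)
Your proposal is correct and follows essentially the same route as the paper: both decompose an ordered tree as a root times a sequence of subtrees, sum the geometric series to get $T(x)=\frac{x}{1-T(x)}$, and solve the resulting quadratic, selecting the branch via the initial condition. Your added remarks on the validity of the formal-power-series substitution and the branch choice at $x=0$ are fine refinements but do not change the argument.
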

\begin{proof}
We have the relationship \begin{equation}T(x)=x+xT(x)+xT^2(x)+...\end{equation} which leads to \begin{equation}T(x)=\dfrac{x}{1-T(x)}\end{equation}  We then solve for $T(x)$ and get \begin{equation}T(x)-T^2(x)-x=0\end{equation} and solve this using the quadratic formula.  We know that $t_1=1$ since there is a single rooted plane tree on 1 vertex.
\end{proof}
This lacks the $x$ in the denominator of the standard generating function, because we are enumerating based on the number of vertices and there are no trees on $0$ vertices and a single tree on $1$ vertex and a single tree on $2$ vertices.  
\begin{lemma}
If we let $V(n)=$ the total number of vertices in all trees of size $n$ then we have $V(n)\sim \binom{2n+2}{n+1}$ or $V(n) \sim \dfrac{4^{n+1}}{\sqrt{(n+1)\pi}}$
\end{lemma}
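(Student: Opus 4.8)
The plan is to pass, as with the earlier statistics, through a generating function. Let $V(x)=\sum_{n\ge 1}V(n)x^{n}$. Because every ordered tree of size $n$ has exactly $n$ vertices, $V(n)=n\,t_{n}$ with $t_{n}=[x^{n}]T(x)$, so $V(x)=xT'(x)$. Equivalently --- and more in keeping with the root decompositions used above --- one marks a single (``red'') vertex of a tree: it is either the root, which contributes $T(x)$, or it lies inside one of the subtrees hanging from the root, in which case reading the children of the root as a forest, followed by the marked subtree, followed by a forest gives
\begin{equation}
V(x)=T(x)+\frac{x\,V(x)}{\bigl(1-T(x)\bigr)^{2}} .
\end{equation}
So the first step is to solve this for $V(x)$.

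Using the defining quadratic $T-T^{2}-x=0$ established above, i.e.\ $x=T(x)\bigl(1-T(x)\bigr)$, the quotient collapses to $\tfrac{T}{1-T}V(x)$, and the linear equation then gives $V(x)=\dfrac{x}{1-2T(x)}=\dfrac{x}{\sqrt{1-4x}}$. The second step is to read off coefficients: from $\dfrac{1}{\sqrt{1-4x}}=\sum_{m\ge 0}\binom{2m}{m}x^{m}$ we get $V(n)=[x^{\,n-1}](1-4x)^{-1/2}=\binom{2n-2}{n-1}$, a central binomial coefficient, and its asymptotics follow from Stirling's formula in the standard shape $\binom{2m}{m}=\dfrac{4^{m}}{\sqrt{\pi m}}\bigl(1+O(m^{-1})\bigr)$. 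Alternatively one can apply Bender's Lemma with $A(x)=x$ and $B(x)=\dfrac{1}{\sqrt{1-4x}}$, noting that $B$ has radius of convergence $\tfrac14$, that $b_{n-1}/b_{n}\to\tfrac14$, and that $A(\tfrac14)\neq 0$.

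I do not expect a serious obstacle: the functional equation is immediate and the algebra is short. The one point that genuinely needs care is the bookkeeping of index shifts, since $T(x)$ here is normalized with no constant term and with $t_{1}=1$ (rather than as the textbook series $\tfrac{1-\sqrt{1-4x}}{2x}$), so it is easy to be off by one or two in the exponent of $4$ and in the argument of the square root when converting to closed binomial form and then to the Stirling estimate. I would guard against this by matching the closed form against the first few exact values ($V(1)=1$, $V(2)=2$, $V(3)=6$, $V(4)=20$) before committing to the stated asymptotic; turning the ``$\sim$'' into a rigorous statement is then just a matter of carrying the $O(m^{-1})$ error term through the final simplification.
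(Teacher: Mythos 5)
Your argument is correct, and the paper itself supplies no proof of this lemma, so there is nothing in the text to diverge from. Both of your routes check out: since every ordered tree of size $n$ contributes exactly $n$ vertices, $V(x)=xT'(x)=\dfrac{x}{\sqrt{1-4x}}$ follows directly from $T(x)=\frac{1-\sqrt{1-4x}}{2}$, and your marked-vertex decomposition $V(x)=T(x)+xV(x)/(1-T(x))^{2}$ together with $x=T(x)(1-T(x))$ collapses to the same function. Your instinct to test small cases is well placed: $V(1)=1$, $V(2)=2$, $V(3)=6$, $V(4)=20$ all match $\binom{2n-2}{n-1}$.

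The one thing to flag is that your (correct) conclusion $V(n)=\binom{2n-2}{n-1}\sim 4^{n-1}/\sqrt{\pi(n-1)}$ does not agree with the lemma as printed, which asserts $V(n)\sim\binom{2n+2}{n+1}\sim 4^{n+1}/\sqrt{(n+1)\pi}$; the two differ asymptotically by a factor of $16$, so the indices in the statement appear to be shifted by two. Your value is the one consistent with the rest of the section: the generating function for leaves is $\frac{x}{2}\bigl(1+\frac{1}{\sqrt{1-4x}}\bigr)$, whose $n$th coefficient is $\frac{1}{2}\binom{2n-2}{n-1}$, and dividing by your $V(n)$ gives the probability $\frac{1}{2}$ recorded in the table, whereas dividing by $\binom{2n+2}{n+1}$ would give $\frac{1}{32}$. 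Treat this as an erratum in the statement rather than a defect in your proof.
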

Now that we have the total number of vertices we can start looking at the statitistics we are interested in.
\subsection{$k$ verties in the subtree}
Similarly to the case for Motzkin trees we again want to find a generating function for this based on the generating function for the number of roots that have this statistic.
\begin{theorem}
Let $L_k(x)=$ the generating function for the number of vertices in all rooted planar trees on $n$ vertices which have $k$ vertices in their subtree.  Then we have that \begin{equation}{L_k(x)=R_k(x)\left(\frac{1}{2}\left(1+\dfrac{1}{\sqrt{1-4x}}\right)\right)}.\end{equation}
\end{theorem}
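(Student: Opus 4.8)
The plan is to imitate the Motzkin-tree argument exactly: derive a linear recurrence for $L_k(x)$ by surgery at the root, solve it, and then simplify the resulting rational-in-$T(x)$ coefficient using the functional equation for $T(x)$.

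First I would set up the decomposition. A pair consisting of an ordered tree on $n$ vertices together with a marked vertex whose subtree has $k$ vertices either has the marked vertex at the root --- contributing exactly $R_k(x)$ by definition --- or has the marked vertex strictly below the root. In the latter case list the root's children subtrees, in order, as $T_1,\dots,T_j$ with $j\ge 1$; each $T_i$ is an arbitrary ordered tree, and the marked vertex lies in exactly one of them. The subtree rooted at the marked vertex is untouched by this surgery, so the subtree containing it contributes $L_k(x)$, each of the other $j-1$ subtrees contributes $T(x)$, and the root contributes a factor $x$. Summing over $j\ge 1$ and over which subtree is distinguished gives
\[
L_k(x) = R_k(x) + x\,L_k(x)\sum_{j\ge 1}\sum_{i=1}^{j} T(x)^{\,j-1}
       = R_k(x) + x\,L_k(x)\sum_{j\ge 1} j\,T(x)^{\,j-1}
       = R_k(x) + \frac{x\,L_k(x)}{(1-T(x))^{2}},
\]
where the last step uses $\sum_{j\ge 1} j\,T^{\,j-1} = \frac{d}{dT}\frac{1}{1-T} = (1-T)^{-2}$.

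Next I would solve this linear equation, obtaining
\[
L_k(x) = \frac{R_k(x)}{\,1 - \dfrac{x}{(1-T(x))^{2}}\,}
       = R_k(x)\cdot\frac{(1-T(x))^{2}}{(1-T(x))^{2} - x}.
\]
The remaining task is purely algebraic: show $\dfrac{(1-T)^{2}}{(1-T)^{2}-x} = \dfrac12\!\left(1+\dfrac{1}{\sqrt{1-4x}}\right)$. I would use the functional equation $T = \frac{x}{1-T}$ (equivalently $x = T - T^{2}$ and $1-T = x/T$) to collapse the fraction to $\frac{1-T}{1-2T}$, then substitute $T = \frac{1-\sqrt{1-4x}}{2}$ so that $1-T = \frac{1+\sqrt{1-4x}}{2}$ and $1-2T = \sqrt{1-4x}$, which yields the claimed coefficient. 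I would also remark, in parallel with the Motzkin case, that $R_k(x) = C_k x^{k}$ with $C_k$ the number of ordered trees on $k$ vertices, so that Bender's Lemma applies with $B(x) = \frac12\bigl(1 + (1-4x)^{-1/2}\bigr)$, whose radius of convergence is $1/4$.

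The only real content is getting the combinatorial recurrence right: justifying the weight $\sum_{j\ge 1} j\,T^{\,j-1} = (1-T)^{-2}$ --- that a distinguished child subtree inside an ordered list of arbitrary length produces this derivative-type factor --- and checking that the marking neither over- nor under-counts (each vertex whose subtree has size $k$ is produced exactly once). The algebraic simplification is routine, but it must be carried out by rewriting intermediate expressions such as $(1-T)^{2}-x$ via the functional equation rather than expanding the radical directly; that rewriting, not the recurrence, is where a careless computation would go wrong.
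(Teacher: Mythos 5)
Your proof is correct and follows essentially the same root-decomposition argument as the paper, deriving a linear equation of the form $L_k(x)=R_k(x)+\frac{x\,L_k(x)}{(1-T(x))^2}$ and solving for $L_k(x)$. In fact your version is the more careful one: the paper's displayed recurrence omits the factor of $x$ for the root (it writes $L_k(x)=R_k(x)+L_k(x)+2L_k(x)T(x)+\cdots$, which as literally written does not yield the stated formula), whereas your inclusion of the $x$, together with the simplification $(1-T)^2-x=(1-T)(1-2T)$ via $x=T-T^2$, is exactly what produces the claimed coefficient $\frac{1}{2}\left(1+\frac{1}{\sqrt{1-4x}}\right)$.
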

\begin{proof}
We have the relationship that \begin{equation}L_k(x)=R_k(x)+L_k(x)+2L_k(x)T(x)+3L_k(x)T(x)^2+4L_k(x)T(x)^3+...\end{equation} and by simplyifing that we get that  \begin{equation}L_k(x)=R_k(x)+\dfrac{L_k(x)}{(1-T(x))^2}.\end{equation}  Plugging in the known value for $T(x)$ and solving gives us the desired generating function.
\end{proof}
We know that half of all vertices in all ordered trees are leaves so we can use that fact combined with Bender's Lemma to construct the following table of probabilities.
\begin{center}
 \begin{tabular}{|c | c| c|} 
 \hline
k & $R_k(x)$ & Probability the subtree\\ &&has $k$ leaves $\approx$\\ [.5ex] 
 \hline
 1 & $x$ & .5\\ 
 \hline
 2 & $x^2$ & .125 \\
 \hline
3 & $2x^3$ & 0.0625 \\
\hline
4 & $5x^4$ & 0.03906\\
\hline
5 & $14x^5$ & 0.02734\\
\hline
6 & $42x^6$ & 0.02051\\
\hline
7 & $132x^7$ & 0.0161133\\
\hline
\end{tabular}
\end{center}
\subsection{$k$ leaves in the subtree}
Much like in the case of Motzkin trees we will have to construct a bivariate generating function here to get the number of trees where the root has $k$ leaves.
\begin{lemma}
Let $T(x,y)$ be the generating function for the number of ordered trees where $x$ counts the number of vertices and $y$ counts the number of leaves.  Then we have \begin{equation}T(x,y)=\dfrac{(xy-x+1)-\sqrt{x^2y^2-2xy^2+x^2-2xy-2x+1}}{2}.\end{equation}
\end{lemma}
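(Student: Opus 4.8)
The plan is to repeat the derivation that produced $T(x)=\dfrac{1-\sqrt{1-4x}}{2}$, this time recording leaves with the extra variable $y$. An ordered tree is either a single vertex, which is then itself a leaf and so contributes $xy$, or a root (which is \emph{not} a leaf) followed by an ordered, nonempty sequence of subtrees, each of which is again an arbitrary ordered tree. A root with exactly $k\geq 1$ children therefore contributes $x\,T(x,y)^k$, so summing the geometric series over $k\geq 1$ gives the functional equation
\begin{equation}T(x,y)=xy+\frac{x\,T(x,y)}{1-T(x,y)}.\end{equation}
This is the natural refinement of the identity $T(x)=\dfrac{x}{1-T(x)}$, and it specializes to it at $y=1$.

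Next I would clear the denominator: multiplying through by $1-T(x,y)$ and collecting terms turns the functional equation into the quadratic
\begin{equation}T(x,y)^2-(1+xy-x)\,T(x,y)+xy=0.\end{equation}
Applying the quadratic formula gives $T(x,y)=\tfrac12\!\left((1+xy-x)\pm\sqrt{(1+xy-x)^2-4xy}\,\right)$, and expanding and simplifying the radicand $(1+xy-x)^2-4xy$ yields the polynomial displayed under the square root in the statement. To fix the sign I would invoke the combinatorial normalization: there is no ordered tree on $0$ vertices, so $[x^0]\,T(x,y)=0$; since the radicand tends to $1$ as $x\to 0$, only the branch with the minus sign has vanishing constant term, and that branch is exactly the claimed formula.

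Finally I would record, rather than struggle with, a couple of consistency checks. Setting $y=1$ should collapse the radicand to $1-4x$ and the whole expression to $\dfrac{1-\sqrt{1-4x}}{2}=T(x)$, recovering the earlier univariate theorem; and the low-order coefficients should reproduce the correct leaf statistics (the unique tree on one vertex is a leaf, giving the term $xy$; the unique tree on two vertices has one leaf, giving $x^2y$; the two trees on three vertices have one and two leaves, giving $x^3(y+y^2)$). I do not expect a genuine obstacle here: the entire argument is a routine application of the quadratic formula, exactly parallel to the proof of the formula for $T(x)$, and the only point requiring any care is the choice of branch of the square root, which the constant-term argument above settles.
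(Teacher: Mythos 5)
Your proposal is correct and follows essentially the same route as the paper: the same decomposition into a single leaf vertex plus the geometric series $xy+xT(x,y)+xT(x,y)^2+\cdots$, the same quadratic $T(x,y)^2-(1+xy-x)T(x,y)+xy=0$, and the quadratic formula, with your added branch-selection and specialization checks being welcome extra care that the paper omits. One small caveat: expanding $(1+xy-x)^2-4xy$ actually gives $x^2y^2-2x^2y+x^2-2xy-2x+1$, not the radicand $x^2y^2-2xy^2+\cdots$ printed in the statement (which fails your own $y=1$ check), so your computation is right and the displayed formula contains a typo rather than your expansion matching it.
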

\begin{proof}
We have the relationship \begin{equation}T(x,y)=xy+xT(x,y)+xT(x,y)^2+xT(x,y)^3+...\end{equation} This is because an ordered tree either just consist of a single vertex which is both the root and a leaf or we can cut off the root and it decomposes into a forest of ordered trees with any number of ordered trees.  The terms $xT(x,y)+xT(x,y)^2+xT(x,y)^3+...$ form a gemoetric series with common ratio $T(x,y)$ so by simplifying we get that \begin{equation}T(x,y)^2-(xy-x+1)T(x,y)+xy=0.\end{equation}  We can then solve for $T(x,y)$ using the quadratic formula and we obtain the desired generating function.
\end{proof}
Now that we have this generating function we can use it and Bender's lemma to construct the following table of probabilities
\begin{center}
 \begin{tabular}{|c | c| c|} 
 \hline
k & $L_k(x)$ & Probability the subtree\\&& has $k $ leaves $\approx$\\ [.5ex] 
\hline
 1 & $\dfrac{x}{1-x}$ & 0.666666667\\ 
\hline
 2 & $\dfrac{x^3}{(1-x)^3}$ & 0.07407407 \\
\hline
3 & $\dfrac{x^4+x^5}{(1-x)^5}$ &  0.04115226\\
\hline
4 & $\dfrac{x^5+3x^6+x^7}{(1-x)^7}$ &  0.0265203475\\
\hline
\end{tabular}
\end{center}

\section{Full Binary Trees}
Here we will look at full binary trees counted by the number of leaves.  This example is slightly different than the two preceeding examples because rather than counting by the total number of vertices we are going to count by the number of leaves.  Again the first step we need to do is calculate the total number of vertices.  This another example of a class of trees enumerated by the Catalan numbers.  There is one significant difference between these and ordered trees however.  For ordered trees we were counting trees by the number of vertices here we are counting by the number of leaves.  Thankfully for a full binary tree on $n$ leaves there are $n-1$ internal vertices so the total number of vertices is simply $2n-1$
\newline
So as with our previous examples we need to calculate the generating function for the number of such trees but this is again another famous example of the Catalan numbers so we know that the generating function is \begin{equation}B(x)=\dfrac{1-\sqrt{1-4x}}{2}.\end{equation}
\subsection{$k$ verties in the subtree}
We again will calculate the desired generating function and we get that.
\begin{theorem}
Let $V_k(x)=$the generating function for the number of vertices in all full Binary trees with $k$ vertices in its subtree.  Then we have that \begin{equation}V_k(x)=\dfrac{R_k(x)}{\sqrt{1-4x}}\end{equation} where $R_k(x)$ is the generating function for the number of roots with $k$ vertices in its subtree.                                                                                                                                                                                                                                                                                                                                                                                                                                                                                                                                                                                            
\end{theorem}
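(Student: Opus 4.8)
The plan is to mirror the argument already used for Motzkin trees: obtain a functional equation that is linear in $V_k(x)$ by decomposing each full binary tree at its root, solve for $V_k(x)$, and then substitute the known expression for $B(x)$.

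First I would record the structural decomposition of a full binary tree counted by leaves: such a tree is either a single leaf, or it consists of a root together with an ordered pair of full binary subtrees. Since only leaves contribute to the exponent of $x$, this yields $B(x)=x+B(x)^2$, which is consistent with the stated $B(x)=\frac{1-\sqrt{1-4x}}{2}$. In particular $1-2B(x)=\sqrt{1-4x}$, and this is the source of the radical that will appear in the denominator.

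Next I would set up the recurrence for $V_k(x)$. Fix a full binary tree and mark one vertex whose subtree has exactly $k$ vertices. If the marked vertex is the root, these configurations are counted by $R_k(x)$ by definition. Otherwise the marked vertex lies in the left subtree or the right subtree of the root; in each of these cases one subtree carries the marked vertex and contributes $V_k(x)$, while the other subtree is an arbitrary full binary tree contributing $B(x)$, and the root itself contributes no factor of $x$ because it is internal. Adding the two symmetric cases gives
\begin{equation}
V_k(x)=R_k(x)+2B(x)V_k(x).
\end{equation}
Solving this linear equation yields $V_k(x)=\frac{R_k(x)}{1-2B(x)}$, and substituting $1-2B(x)=\sqrt{1-4x}$ produces the claimed formula.

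The only point requiring care is the combinatorial bookkeeping in the recurrence: one must check that the unmarked subtree genuinely contributes the full series $B(x)$ (not a restricted variant) and that the factor $2$ correctly records the left/right choice without double counting — which it does, since the marked non-root vertex sits in exactly one of the two subtrees, so the two cases are disjoint. This is the same bookkeeping carried out in the Motzkin argument, so I expect no new difficulty to arise; the main "obstacle" is simply stating the decomposition cleanly.
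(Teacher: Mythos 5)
Your proposal is correct and follows exactly the paper's route: the paper states the same recurrence $V_k(x)=R_k(x)+2V_k(x)B(x)$ and solves it, citing the Motzkin argument for justification. You simply supply the details the paper leaves implicit, including the correct observation that the internal root contributes no factor of $x$ because these trees are counted by leaves.
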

\begin{proof}
We have the recurrence relationship \begin{equation}V_k(x)=R_k(x)+2V_k(x)B(x).\end{equation}  This can be found using the same argument we used for Motzkin trees.
\end{proof}
\begin{corollary}
The generating function for the number of leaves is \begin{equation}L(x)=\dfrac{x}{\sqrt{1-4x}}\end{equation}  A leaf is a vertex with only a single vertex in its subtree. So the generating function for the number of trees where the root is a leaf is simply $x$.
\end{corollary}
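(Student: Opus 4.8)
The plan is to observe that a leaf is exactly a vertex whose subtree consists of a single vertex, so the generating function $L(x)$ counting leaves across all full binary trees is precisely $V_1(x)$ in the notation of the preceding theorem. By that theorem, $V_1(x) = \dfrac{R_1(x)}{\sqrt{1-4x}}$, so the whole task reduces to identifying $R_1(x)$, the generating function for full binary trees whose root is a leaf (equivalently, whose root has a size-$1$ subtree), and then substituting.

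First I would pin down $R_1(x)$. A full binary tree whose root has only one vertex in its subtree is a tree with no internal vertices, i.e.\ the single-vertex tree. Since the class of full binary trees here is enumerated by the number of leaves, this tree carries weight $x^1$, and it is the unique such tree, so $R_1(x) = x$. Substituting $k = 1$ and $R_1(x) = x$ into the theorem gives
\begin{equation}
L(x) = V_1(x) = \dfrac{x}{\sqrt{1-4x}},
\end{equation}
which is the claimed formula.

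The only point that needs care is the bookkeeping convention: because a full binary tree on $n$ leaves has $2n-1$ vertices total, "subtree of size $1$" genuinely means "leaf," and the single-vertex tree really does contribute $x^1$ (not a constant), which is what makes $R_1(x) = x$ rather than $R_1(x) = 1$. This is the main (and rather mild) obstacle; once the convention is fixed, the result is immediate from the already-established identity for $V_k(x)$. As a sanity check one can note $[x^n]\,\dfrac{x}{\sqrt{1-4x}} = \binom{2n-2}{n-1}$, which agrees with there being $C_{n-1}$ full binary trees on $n$ leaves, each contributing $n$ leaves, for a total of $n\,C_{n-1} = \binom{2n-2}{n-1}$.
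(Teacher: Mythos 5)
Your proposal is correct and follows the same route as the paper: identify a leaf as a vertex with a size-one subtree, note that the unique tree whose root is a leaf contributes $R_1(x)=x$ (since the class is counted by leaves), and substitute into the theorem $V_k(x)=R_k(x)/\sqrt{1-4x}$. Your coefficient sanity check $[x^n]\,x/\sqrt{1-4x}=\binom{2n-2}{n-1}=nC_{n-1}$ is a nice addition but the argument is the same.
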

Asymptotically half of all vertices in a full binary tree are leaves.  Using that information and Bender's lemma we can construct the following table of probabilities.
\vspace{.1in}
\newline
Since this tree is counted by the number of leaves rather than the number of vertices we would normally need to use a bivariate generating function here to get the number of trees where the root has $k$ vertices, but that is actually not necessary in this case.  A full binary tree on $n$ leaves will have $2n-1$ vertices.  So if we want to find the number of trees with $k$ vertices than $k$ must be odd and of the form $2n-1$ and in that case it will simply have $n$ leaves so the generating function for roots with $k$ vertices is simply $B_{\left(\frac{k+1}{2}\right)}x^{\frac{k+1}{2}}.$ 
\begin{center}
 \begin{tabular}{|c | c| c|} 
 \hline
k & $R_k(x)$ & Probability the subtree\\ &&has $k$ vertices $\approx$\\ [.5ex] 
 \hline
 1 & $x$ & .5\\ 
 \hline
 2 & 0 & 0 \\
 \hline
3 & $x^2$ & 0.125 \\
\hline
4 & 0 & 0\\
\hline
5 & $2x^3$ & 0.0625\\
\hline
6 & $0$ & 0\\
\hline
7 & $5x^4$ & 0.0161133\\
\hline
\end{tabular}
\end{center}
We get 0 for all even values of $k$ because a full binary tree cannot have an even number of vertices.  We cannot technically apply Bender's lemma in those cases, but such machinery is unnecessary as clearly the probabilities are 0 in those cases since there can be no vertices with an even number of vertices in their subtree.
\subsection{$k$ leaves in the subtree}
Since these trees are counted by the number of leaves the generating function for the number of trees where the root has $k$ vertices is simply $B_k x^k$.  From that we can construct the following table of probabilities.
\begin{center}
 \begin{tabular}{|c | c| c|} 
 \hline
k & $R_k(x)$ & Probability the subtree\\ &&has $k$ leaves $\approx$\\ [.5ex] 
 \hline
 1 & $x$ & .5\\ 
 \hline
 2 & $x^2$ & .125 \\
 \hline
3 & $2x^3$ & 0.0625 \\
\hline
4 & $5x^4$ & 0.03906\\
\hline
5 & $14x^5$ & 0.02734\\
\hline
6 & $42x^6$ & 0.02051\\
\hline
7 & $132x^7$ & 0.0161133\\
\hline
\end{tabular}
\end{center}
This example did not get to show off the technique to its full capability, since, while ostensibly the size of the trees was based on the number of leaves, in this specific tree the number of leaves and the number of vertices are directly related.  The number of vertices with $k$ leaves in their subtree would simply be the number of vertices with $2k-1$ vertices in its subtree.  In the next section we will see an example of a tree counted by leaves where there isn't a direct relationship between the number of vertices and the number of leaves.

\section{Schroeder Trees} A Schroeder tree of size $n$ is a rooted planar tree on $n$ leaves in which each non-leaf vertex has at least 2 children.  This problem has one substantial difference from the problem discussed in the precedding section.  In the case of full binary trees a tree with $n$ leaves had $2n-1$ vertices.  So it was quite easy to find the total number of vertices and the number of trees with $m$ vertices.  In this case a tree with $n$ leaves can have a range of values for the number of vertices.  So in this case we will need to use a bivariate generating function to find the number of vertices and to find the generating function for the number of trees on $k$ leaves that have $m$ vertices.
\vspace{.1in}
\newline
As with all the previous examples the first step is to find the generating function for the total number of trees.
\begin{theorem}
The generating function, $S(x)=\displaystyle\sum s_n x^n$, for the number of all Schroeder trees with n leaves is given by \begin{equation}S(x)=\dfrac{2x}{1+x+\sqrt{1-6x+x^2}}\end{equation}
\end{theorem}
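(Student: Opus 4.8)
The plan is to mirror the approach used above for ordered trees: decompose a Schroeder tree at its root, solve the resulting algebraic equation, and then rationalize to match the stated closed form.

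First I would set up the functional equation. A Schroeder tree counted by leaves is either a single leaf, contributing $x$, or it has a root with $k \ge 2$ ordered children, each of which is itself a Schroeder tree. This gives
\begin{equation}
S(x) = x + \sum_{k \ge 2} S(x)^k.
\end{equation}
Since there is no Schroeder tree with zero leaves we have $S(0) = 0$, so the sum on the right is a well-defined formal power series, and summing the geometric series (common ratio $S(x)$) yields
\begin{equation}
S(x) = x + \frac{S(x)^2}{1 - S(x)}.
\end{equation}

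Next I would clear the denominator. Multiplying through by $1 - S(x)$ and collecting terms gives the quadratic
\begin{equation}
2S(x)^2 - (1 + x) S(x) + x = 0,
\end{equation}
and the quadratic formula produces
\begin{equation}
S(x) = \frac{(1 + x) \pm \sqrt{(1+x)^2 - 8x}}{4} = \frac{(1 + x) \pm \sqrt{1 - 6x + x^2}}{4},
\end{equation}
where the condition $S(0) = 0$ forces the minus sign.

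Finally I would reconcile this with the expression in the statement by rationalizing: multiplying the numerator and denominator of $\dfrac{2x}{1 + x + \sqrt{1 - 6x + x^2}}$ by $1 + x - \sqrt{1 - 6x + x^2}$ and using $(1+x)^2 - (1 - 6x + x^2) = 8x$ gives precisely $\dfrac{(1+x) - \sqrt{1 - 6x + x^2}}{4}$, so the two forms agree. The only mildly delicate points are justifying the summation of the geometric series at the level of formal power series (immediate from $S(0)=0$) and choosing the correct branch of the square root; neither is a real obstacle, so I expect the argument to be short.
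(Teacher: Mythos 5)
Your proposal is correct and follows essentially the same route as the paper: the same root decomposition giving $S(x)=x+\sum_{k\ge 2}S(x)^k$, summation of the geometric series, and the quadratic formula. The only addition is the explicit rationalization showing that $\frac{(1+x)-\sqrt{1-6x+x^2}}{4}$ equals the stated form $\frac{2x}{1+x+\sqrt{1-6x+x^2}}$, which the paper leaves implicit.
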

\begin{proof}
This comes from the relationship \begin{equation}S(x)=x+(S(x))^2+(S(x))^3+(S(x))^4+...\end{equation} which gives us \begin{equation}S(x)=x+\dfrac{(S(x))^2}{1-S(x)}.\end{equation}  This relationship is based on the fact the root can be a leaf or the parent of at least two children.  If it is a leaf it contributes $x$ and if it is the parent of n children each child would be another Schroeder tree, we do not multiply by $x$  since we are counting by leaves and the root would not be a leaf.  Solving this equation we get the desired generating function.
\end{proof}
\begin {corollary}
Let $l(n)$ be the number of leaves in all Schroeder trees of size n.  Then we have \begin{equation}l(n) \sim \dfrac{(1+\sqrt{2})(n-1)^{(\frac{-1}{2})}}{2^{\frac{7}{4}}\sqrt{\pi}}\left(3+\sqrt{8}\right)^{n-1}.\end{equation}
\end{corollary}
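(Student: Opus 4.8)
The plan is to reduce the claim to a routine singularity analysis of $S(x)$. The crucial observation is that, by definition, every Schroeder tree of size $n$ has exactly $n$ leaves, so $l(n) = n\,s_n$; equivalently $\sum_n l(n)x^n = xS'(x)$. Hence it is enough to find the first-order asymptotics of $s_n$ and then multiply by $n$. To make the singularity structure transparent I would first rationalize the given formula, which collapses to $S(x) = \frac{1+x-\sqrt{1-6x+x^2}}{4}$. The radicand $1-6x+x^2$ has roots $3\pm 2\sqrt2$, so the unique singularity of $S$ in its disk of convergence is the branch point $\rho = 3-2\sqrt2 = (\sqrt2-1)^2$, with $1/\rho = 3+2\sqrt2 = 3+\sqrt8$. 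Since $S$ is algebraic with $1-6x+x^2$ as its only branch polynomial and $3+2\sqrt2 > 3-2\sqrt2 > 0$, the function is analytic in a suitable domain dented at $\rho$ and $\rho$ is the only dominant singularity, so the transfer theorem of singularity analysis applies.

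Next I would write $1-6x+x^2 = (\rho-x)(\rho'-x)$ with $\rho' = 3+2\sqrt2$ and read off the local expansion at $\rho$: since $\sqrt{1-6x+x^2} = \sqrt{\rho(\rho'-\rho)}\,\sqrt{1-x/\rho}\,(1+o(1))$, we get $S(x) = S(\rho) - \tfrac14\sqrt{\rho(\rho'-\rho)}\,\sqrt{1-x/\rho} + \cdots$ near $x=\rho$. Using $[x^n]\sqrt{1-x/\rho} \sim -\tfrac{1}{2\sqrt\pi}\,n^{-3/2}\rho^{-n}$, the transfer theorem yields
\begin{equation}
s_n \sim \frac{\sqrt{\rho(\rho'-\rho)}}{8\sqrt\pi}\,n^{-3/2}\rho^{-n}, \qquad\text{so}\qquad l(n) = n\,s_n \sim \frac{\sqrt{\rho(\rho'-\rho)}}{8\sqrt\pi}\,n^{-1/2}\rho^{-n}.
\end{equation}

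What remains is algebra: simplify the constant and rewrite the growth rate. With $\rho'-\rho = 4\sqrt2$ and $\sqrt\rho = \sqrt2-1$ one gets $\sqrt{\rho(\rho'-\rho)} = 2\cdot 2^{1/4}(\sqrt2-1)$; combining this with $\rho^{-n} = (3+\sqrt8)(3+\sqrt8)^{n-1}$, the identity $(\sqrt2-1)(3+2\sqrt2) = 1+\sqrt2$, the reduction $\tfrac{2\cdot 2^{1/4}}{8} = 2^{-7/4}$, and $n^{-1/2}\sim (n-1)^{-1/2}$, produces exactly the stated expression $\frac{(1+\sqrt2)(n-1)^{-1/2}}{2^{7/4}\sqrt\pi}(3+\sqrt8)^{n-1}$. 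I do not expect a serious obstacle: the only genuine points of care are checking the analyticity hypotheses needed for the transfer theorem (routine for an algebraic generating function with a single branch point in its disk of convergence) and keeping the nest of surd simplifications honest so that the leading constant lands in the precise form $(1+\sqrt2)/(2^{7/4}\sqrt\pi)$.
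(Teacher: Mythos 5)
Your proposal is correct and follows essentially the same route as the paper: obtain the asymptotics of $s_n$ from the dominant singularity of $S(x)$ at $\rho = 3-2\sqrt{2}$ and then multiply by $n$ since every Schroeder tree of size $n$ has exactly $n$ leaves. The paper merely asserts the coefficient extraction, whereas you carry out the singularity analysis and the surd simplifications explicitly, and your constants check out.
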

\begin{proof}
We can extract the coefficent from the generating function and we get that \begin{equation}S(n) \sim \dfrac{(1+\sqrt{2})(n-1)^{(\frac{-3}{2})}}{2^{\frac{7}{4}}\sqrt{\pi}}\left(3+\sqrt{8}\right)^{n-1},\end{equation} and there are precisely $n$ leaves on each Schroeder tree, so we simply multiply that number by $n$.
\end{proof}
We will now need to find the total number of vertices and unlike the previous several examples a tree of size $k$ can have many different number of vertices so we will need to use a more advaned method to calculate the number of vertices.  To do this we will calculate a bivariate generating function.
\begin{theorem}
The generating function for the number of vertices in all Schroeder trees with  $n$ leaves is given by \begin{equation}V(x)=\dfrac{3-x+\sqrt{1-6x+x^2}}{4\sqrt{1-6x+x^2}}\left(\dfrac{2x}{1+x+\sqrt{1-6x+x^2}}\right).\end{equation}
\end {theorem}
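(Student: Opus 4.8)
The plan is to work with the bivariate generating function $S(x,y)$ in which $x$ marks leaves and $y$ marks \emph{all} vertices, so that $S(x,y)=\sum_{n,v}c_{n,v}\,x^{n}y^{v}$, where $c_{n,v}$ is the number of Schroeder trees with $n$ leaves and $v$ vertices. The quantity the theorem asks for is then
\begin{equation}
V(x)=\left.\frac{\partial}{\partial y}S(x,y)\right|_{y=1},
\end{equation}
since differentiating in $y$ brings down the factor $v$ and setting $y=1$ forgets the $y$-grading, leaving $\sum_{n}\bigl(\text{total number of vertices among all }n\text{-leaf Schroeder trees}\bigr)x^{n}$.

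First I would set up the functional equation for $S(x,y)$. Decomposing a Schroeder tree at its root exactly as in the proof of the formula for $S(x)$ --- the root is either a leaf, contributing $xy$, or it has a sequence of at least two Schroeder subtrees hanging from it, with the root itself contributing one extra factor of $y$ --- gives
\begin{equation}
S(x,y)=xy+y\,\frac{S(x,y)^{2}}{1-S(x,y)},
\end{equation}
which specializes at $y=1$ to the relation $S(x)=x+\frac{S(x)^{2}}{1-S(x)}$ used earlier. Clearing the denominator yields the quadratic
\begin{equation}
(1+y)\,S(x,y)^{2}-(1+xy)\,S(x,y)+xy=0,
\end{equation}
whose discriminant $(1+xy)^{2}-4xy(1+y)$ equals $1-6x+x^{2}$ at $y=1$.

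Next I would obtain $V(x)$ by implicit differentiation of this quadratic rather than by differentiating a radical, which is cleaner. Writing $R=\sqrt{1-6x+x^{2}}$, differentiating in $y$, then putting $y=1$ and using the identities $2S(x)^{2}=(1+x)S(x)-x$ and $4S(x)=1+x-R$ that follow from the formula for $S(x)$, one is left with a linear equation for $\partial_{y}S|_{y=1}$ whose solution is
\begin{equation}
V(x)=\frac{(1+4x-x^{2})+(x-1)R}{8R}.
\end{equation}
Finally I would reconcile this with the stated expression: rationalizing $S(x)=\frac{2x}{1+x+R}$ to $\frac{(1+x)-R}{4}$, expanding the product $(3-x+R)\bigl((1+x)-R\bigr)$, and using $R^{2}=1-6x+x^{2}$ to remove the $R^{2}$ term shows that $\frac{3-x+R}{4R}\,S(x)$ equals $\frac{(1+4x-x^{2})+(x-1)R}{8R}$, which completes the proof. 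I expect the only real difficulty to be bookkeeping --- carrying the $y$-dependence correctly through the differentiation and then matching two superficially different closed forms --- so as a safeguard I would check the first few coefficients against direct enumeration, namely $V(x)=x+3x^{2}+14x^{3}+\cdots$, the $1$-, $2$-, and $3$-leaf Schroeder trees contributing $1$, $3$, and $4+5+5=14$ vertices respectively.
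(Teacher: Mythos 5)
Your proposal is correct and follows essentially the same route as the paper: both set up the bivariate generating function with $x$ marking leaves and $y$ marking all vertices via the root decomposition $S(x,y)=xy+y\,S(x,y)^2/(1-S(x,y))$, and then extract $V(x)=\partial_y S|_{y=1}$. The only difference is that the paper differentiates the explicit closed form $S(x,y)=\frac{1+xy-\sqrt{(xy)^2+2xy+1-4xy(y+1)}}{2y+2}$ while you differentiate the quadratic implicitly, a minor computational simplification; your closed form and the coefficient check $x+3x^2+14x^3+\cdots$ both agree with the stated result.
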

\begin{proof}
We set up a bivariate generating function where $x $ counts the leaves and $y$ counts the vertices.  We call this $V(x,y)$ from there, if we differentiate with respect to $y$ and then set $y=1$ we get the univariate generating function for the number of all vertices in trees with $n$ leaves.  We have the relationship \begin{equation}V(x,y)=xy+y(V(x,y))^2+y(V(x,y))^3+y(V(x,y))^4+...\end{equation} which gives us \begin{equation}V(x,y)=xy+\dfrac{y(V(x,y))^2}{1-V(x,y)}.\end{equation}  Solving for $V(x,y)$ we get \begin{equation}V(x,y)=\dfrac{1+xy-\sqrt{(xy)^2+2xy+1-4xy(y+1)}}{2y+2}.\end{equation}  We then differentiate with respect to $y$ and substitute in $y=1$ to get the desired generating function.
\vspace{.1in}
\newline
We will see an arguably simpler way we coulod have calculated this later but the bivariate generating function we calculated here will be needed for later work.
\end{proof}
\begin {corollary}
Let $V(n)$ be the number of all vertices in all Schroeder trees with $n$ leaves.  We have $V(n) \sim \dfrac{(3+\sqrt{8})^n}{2^{\frac{9}{4}}\sqrt{\pi n}}$.
\end{corollary}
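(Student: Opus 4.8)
The plan is to read off $V(n)=[x^n]V(x)$ from the singular behaviour of the generating function $V(x)$ produced in the preceding theorem, by singularity analysis. Writing $D(x)=\sqrt{1-6x+x^2}$ and recalling that $S(x)=\frac{2x}{1+x+D(x)}$ is the Schroeder generating function, I would first record the convenient splitting
\[
V(x)=\frac{3-x+D(x)}{4D(x)}\,S(x)=\frac14\,S(x)+\frac{(3-x)\,S(x)}{4D(x)}.
\]
The candidate singularities of $V(x)$ are the zeros of $1-6x+x^2$, namely $x=3\pm2\sqrt2$, and the zeros of $1+x+D(x)$; for the latter, $D(x)=-(1+x)$ forces, on squaring, $-6x=2x$, hence $x=0$, where $D(0)=1\neq-1$, a contradiction, so $1+x+D(x)$ never vanishes. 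Thus the dominant singularity is the branch point $\rho=3-2\sqrt2$ (the other zero $3+2\sqrt2=1/\rho$ being larger in modulus), and $V(x)$ extends analytically to a $\Delta$-domain anchored at $\rho$, so the transfer theorems of singularity analysis apply.

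Next I would extract the singular part. The term $\frac14 S(x)$ contributes only $\frac14 s_n=O(\rho^{-n}n^{-3/2})$ by the asymptotics of $s_n$ recorded in the corollary on $l(n)$, so it is negligible. For the main term, I use that the roots of $1-6x+x^2$ multiply to $1$ to factor $1-6x+x^2=(1-x/\rho)(1-\rho x)$, whence $D(x)=(1-x/\rho)^{1/2}(1-\rho x)^{1/2}$ with $(1-\rho x)^{1/2}$ analytic and non-zero at $\rho$, while $S(\rho)=\frac{2\rho}{1+\rho}$ is finite. A short local expansion then gives
\[
V(x)=C\,(1-x/\rho)^{-1/2}+O(1)\quad\text{as }x\to\rho\text{ in the }\Delta\text{-domain},\qquad C=\frac{(3-\rho)\,\rho}{2(1+\rho)\sqrt{1-\rho^2}},
\]
and the transfer theorems yield $[x^n]\bigl(C(1-x/\rho)^{-1/2}\bigr)=C\rho^{-n}/\sqrt{\pi n}+O(\rho^{-n}/n)$ and $[x^n]\,O(1)=O(\rho^{-n}/n)$, so $V(n)\sim C\,\rho^{-n}/\sqrt{\pi n}$. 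Since $\rho^{-1}=3+2\sqrt2=3+\sqrt8$, this is $V(n)\sim C\,(3+\sqrt8)^n/\sqrt{\pi n}$.

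The only real work remaining, and the step I expect to be the main (if routine) obstacle, is the nested-radical simplification $C=2^{-9/4}$. Here I would substitute $3-\rho=2\sqrt2$, $1+\rho=2(2-\sqrt2)$, and $1-\rho^2=(1-\rho)(1+\rho)=4(\sqrt2-1)(2-\sqrt2)$, and use the elementary identities $\frac{3-2\sqrt2}{2-\sqrt2}=\frac{2-\sqrt2}{2}$ and $3\sqrt2-4=\sqrt2\,(3-2\sqrt2)$. These collapse $C$ to $\frac{\sqrt2-1}{4\sqrt{3\sqrt2-4}}$, and then $C^2=\frac{(\sqrt2-1)^2}{16(3\sqrt2-4)}=\frac{3-2\sqrt2}{16\sqrt2\,(3-2\sqrt2)}=\frac{1}{16\sqrt2}=2^{-9/2}$, so $C=2^{-9/4}$ and $V(n)\sim\frac{(3+\sqrt8)^n}{2^{9/4}\sqrt{\pi n}}$. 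As a sanity check, the same local expansion applied directly to $S(x)=\frac{2x}{1+x+D(x)}$ reproduces the asymptotics of $s_n$ stated earlier, which gives some confidence in the constant.
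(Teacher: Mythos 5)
Your proposal is correct and takes the same route as the paper, whose entire proof is the one-line assertion that the asymptotics follow by extracting coefficients from the generating function $V(x)$ of the preceding theorem; you simply carry out that singularity analysis in full, and your constant $C=2^{-9/4}$ and growth rate $\rho^{-1}=3+\sqrt{8}$ check out (numerically, $C\approx 0.21022=2^{-9/4}$). No gaps.
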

\begin{proof}
We can extract the coefficent from the previous generating function giving us these asymptotics.
\end{proof}
Using this we can calculate the probability that a randomly selected vertex in a randomly selected Schroeder tree is a leaf.
\begin {corollary}
The probability that a randomly selected vertex in a random Schroeder tree is a leaf approaches \begin{equation}\dfrac{1+\sqrt{2}}{\sqrt{2}(3+\sqrt{8})}\approx .293\end{equation} as the number of leaves goes to infinity.
\end{corollary}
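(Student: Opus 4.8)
The plan is to read off the limiting probability as the ratio of the two aggregate counts already obtained, namely the total number of leaves $l(n)$ and the total number of vertices $V(n)$ over all Schroeder trees with $n$ leaves. First I would make the statement precise: interpreting ``a randomly selected vertex in a random Schroeder tree on $n$ leaves'' as a vertex drawn uniformly from the disjoint union of all $s_n$ such trees, the probability that it is a leaf is exactly $l(n)/V(n)$ (equivalently, since each such tree has exactly $n$ leaves, the ratio of the expected number of leaves to the expected number of vertices in a uniformly random $n$-leaf Schroeder tree). So the corollary is equivalent to the assertion that $l(n)/V(n)$ tends to $\frac{1+\sqrt2}{\sqrt2(3+\sqrt8)}$ as $n\to\infty$.

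Next I would substitute the asymptotics proved in the two corollaries just above, $l(n)\sim \frac{1+\sqrt2}{2^{7/4}\sqrt\pi}\,(n-1)^{-1/2}(3+\sqrt8)^{n-1}$ and $V(n)\sim \frac{1}{2^{9/4}\sqrt\pi}\,n^{-1/2}(3+\sqrt8)^{n}$, and divide. The $\sqrt\pi$ cancels; the exponential factors contribute $(3+\sqrt8)^{n-1}/(3+\sqrt8)^{n}=(3+\sqrt8)^{-1}$; the powers of $2$ contribute $2^{9/4}/2^{7/4}=2^{1/2}$; and the polynomial prefactors contribute $(n-1)^{-1/2}/n^{-1/2}=\sqrt{n/(n-1)}\to 1$. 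Passing to the limit $n\to\infty$ therefore leaves a single constant expressed through $\sqrt2$.

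The remaining step is pure surd algebra: simplify that constant and confirm it equals $\frac{1+\sqrt2}{\sqrt2(3+\sqrt8)}\approx 0.293$. It helps to record $\sqrt8=2\sqrt2$, so the denominator is $3+2\sqrt2$, whose conjugate $3-2\sqrt2$ over $\mathbf{Q}(\sqrt2)$ satisfies $(3+2\sqrt2)(3-2\sqrt2)=1$, which makes rationalizing the denominator and extracting the decimal value routine. I do not expect any genuine computational obstacle, since the whole argument is a one-line consequence of the two asymptotics already established; the only step that needs care is conceptual, namely fixing the meaning of ``random vertex'' so that its probability of being a leaf is unambiguously $l(n)/V(n)$, and I would state that normalization explicitly at the outset.
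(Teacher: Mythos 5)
Your approach is exactly the paper's: the proof given there is the single sentence ``compare the number of leaves to the number of vertices,'' and your proposal is a careful expansion of that same division of the two asymptotics, with the added (and worthwhile) precision of fixing the probability model so that the quantity in question is unambiguously $l(n)/V(n)$.

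However, the step you defer --- ``simplify that constant and confirm it equals $\frac{1+\sqrt2}{\sqrt2(3+\sqrt8)}$'' --- will fail, and your own intermediate arithmetic already shows why. You correctly find that the powers of $2$ contribute $2^{9/4}/2^{7/4}=\sqrt2$ as a \emph{multiplicative} factor, so the limit you actually obtain is
\begin{equation*}
\frac{\sqrt2\,(1+\sqrt2)}{3+\sqrt8}=(\sqrt2+2)(3-2\sqrt2)=2-\sqrt2\approx 0.586,
\end{equation*}
whereas the statement places the $\sqrt2$ in the denominator, giving $1-\tfrac{1}{\sqrt2}\approx 0.293$ --- exactly half. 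No amount of surd manipulation reconciles these; the constant in the corollary (and the $0.2929$ entries propagated into the later tables) appears to be off by a factor of $2$, while the two ingredient asymptotics for $l(n)$ and $V(n)$ check out under singularity analysis. A direct numerical check supports your arithmetic rather than the stated value: expanding $V(x)=\frac{3-x+\sqrt{1-6x+x^2}}{4\sqrt{1-6x+x^2}}\,S(x)$ gives $V(1),V(2),\dots=1,3,14,70,363,1925,\dots$, while $l(n)=n s_n=1,2,9,44,225,1182,\dots$, and the ratios $2/3,\ 9/14,\ 44/70,\ 225/363,\ 1182/1925\approx 0.667,0.643,0.629,0.620,0.614$ decrease toward $2-\sqrt2\approx0.586$, nowhere near $0.293$. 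So carry out your plan as written, but expect to end at $2-\sqrt2$ and to have to flag the discrepancy with the stated constant rather than confirm it.
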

\begin{proof}
We compare the number of leaves which was calculated in Section 5 Subsection 1 to the number of vertices that was calculated in the previous theorem.
\end{proof}
This is also the probability that a vertex has subtree size 1 since every leaf has a subtree of size one and the only vertices with a size one subtree are those vertices which are leaves.
\subsection{$k$ verties in the subtree}
We will first handle the case where the subtree has $k$ vertices in it.
\begin{lemma}
The generating function for the number of vertices which have size $k$ subtrees is \begin{equation}\dfrac{(R_k(x))(3-x+\sqrt{1-6x+x^2})}{4\sqrt{1-6x+x^2}},\end{equation} where $R_k(x)$ is the generating function for the number of trees with $k$ vertices.
\end{lemma}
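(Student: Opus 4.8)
The plan is to follow the same two-step template used for Motzkin and ordered trees: first derive a linear equation for the generating function in the statement, which I will call $V_k(x)$ (the total number of vertices whose subtree has $k$ vertices, over all Schroeder trees counted by leaves), and then substitute the known closed form for $S(x)$ and simplify.

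First I would set up the recurrence by decomposing a Schroeder tree at its root. By definition the root contributes $R_k(x)$ to $V_k(x)$. If the root is not a leaf it has $m\geq 2$ children, each the root of a Schroeder tree, and a marked vertex whose subtree has $k$ vertices lies in exactly one of these $m$ subtrees; so we choose the distinguished subtree in $m$ ways (it contributes $V_k(x)$) and the remaining $m-1$ subtrees each contribute $S(x)$, with no extra factor of $x$ since we count by leaves and the root is internal. Summing over $m$ and using $\sum_{m\geq 1} m t^{m-1} = (1-t)^{-2}$ gives
\begin{equation}
V_k(x) = R_k(x) + \sum_{m=2}^{\infty} m\,V_k(x)\,S(x)^{m-1} = R_k(x) + V_k(x)\left(\frac{1}{(1-S(x))^2} - 1\right),
\end{equation}
and solving this linear equation yields
\begin{equation}
V_k(x) = \frac{R_k(x)\,(1-S(x))^2}{2(1-S(x))^2 - 1}.
\end{equation}

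Next I would substitute the closed form for $S(x)$. Rationalizing the denominator of $S(x)=\frac{2x}{1+x+\sqrt{1-6x+x^2}}$ and using $(1+x)^2-(1-6x+x^2)=8x$ gives the cleaner form $S(x)=\frac{1+x-\sqrt{1-6x+x^2}}{4}$, so that $1-S(x)=\frac{3-x+\sqrt{1-6x+x^2}}{4}$. Writing $\Delta=\sqrt{1-6x+x^2}$, a short expansion shows $(3-x+\Delta)^2-8 = 2\Delta(3-x+\Delta)$, hence $2(1-S(x))^2-1 = \frac{\Delta(3-x+\Delta)}{4}$ while $(1-S(x))^2 = \frac{(3-x+\Delta)^2}{16}$. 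Dividing, the factor $(3-x+\Delta)$ cancels once and one obtains
\begin{equation}
\frac{(1-S(x))^2}{2(1-S(x))^2-1} = \frac{3-x+\sqrt{1-6x+x^2}}{4\sqrt{1-6x+x^2}},
\end{equation}
which is exactly the claimed expression after multiplying by $R_k(x)$.

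I expect the only real obstacle is the bookkeeping in the combinatorial step — in particular justifying the factor $m$ (exactly one child subtree is distinguished as the one containing the marked vertex) and remembering that the root contributes no power of $x$ because these trees are enumerated by leaves. The subsequent manipulation is routine once $S(x)$ is rewritten as $\frac{1+x-\Delta}{4}$. As in the earlier sections, one should note that the identity holds first as formal power series and, since $\Delta$ does not vanish near $x=0$, also as an identity of analytic functions, so that Bender's lemma can later be invoked with $B(x)=\frac{3-x+\sqrt{1-6x+x^2}}{4\sqrt{1-6x+x^2}}$.
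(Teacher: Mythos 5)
Your proof is correct and takes essentially the same route as the paper: decompose at the root, observe that the marked vertex lies in exactly one of the $m\ge 2$ child subtrees (giving the factor $m$ and the term $\sum_{m\ge 2} m\,V_k(x)S(x)^{m-1}$), then substitute the rationalized form $S(x)=\frac{1+x-\sqrt{1-6x+x^2}}{4}$. In fact your summation $\sum_{m\ge 2} mS^{m-1}=\frac{1}{(1-S)^2}-1$ and the ensuing algebra are more careful than the paper's own write-up, which mis-simplifies this series to $\frac{S}{(1-S)^2}$ (that is, to $\sum_{m\ge 1}mS^{m}$) --- an intermediate slip that would not yield the stated closed form, even though the lemma's final expression is correct and agrees with your derivation.
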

\begin{proof}
This follows from the relationship \begin{equation}T_k(x)=R_k(x)+2T_k(x)S(x)+3T_k(x)(S(x))^2+4T_k(x)(S(x))^3+... \hspace{.1 in}.\end{equation}  The $R_k(x)$ accounts for all cases where the root has a subtree of size $k$.  After accounting for that we can cut off the root and that will break down our tree into a forest of some number of trees $m$.  We look at a specific subtree in that forest.  The number of vertices with a size $k$ subtree in that tree is given by $R_k(x).$  We now multiply that by the total number of configurations which is $S(x)$ for each of the remaining trees.  This means $T_k(x)S(x)^{m-1}$ gives the number of vertices that one of the $m$ subtrees contributes and then we multiply that by $m$ to get the total number of all of the vertices in the forest.  Looking at that we get 
\newline
\begin{equation}T_k(x)=R_k(x)+T_k(x)\dfrac{S(x)}{(1-S(x))^2}.\end{equation}  Substituting in what we know we get the desired generating function.
\end{proof}
As an aside we could have also used this technique to find the generating function for the number of vertices and the number of leaves.  In that case for leaves we would have \begin{equation}L(x)=x\dfrac{3-x+\sqrt{1-6x+x^2}}{4\sqrt{1-6x+x^2}},\end{equation} since the only way the root can be a leave is if it is the only vertex and that tree has generating function $x$.  Similarly the number of vertices is given by \begin{equation}V(x)=S(X)\dfrac{3-x+\sqrt{1-6x+x^2}}{4\sqrt{1-6x+x^2}},\end{equation} since the root is always a vertex.
\newline
Now all we need to do is find $R_k(x)$.  We can do this using a bivariate generating functions.
\begin{lemma}
Let $R(x,y)$ be the bivariate generating function where $x$ is indexed by the number of leaves in the tree and $y$ is by the number of total vertices in the tree.  With that we have \begin{equation}R(x,y)=\dfrac{1+xy-\sqrt{(xy)^2+2xy+1-4xy(y+1)}}{2y+2}.\end{equation}
\end{lemma}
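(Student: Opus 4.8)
The plan is to derive a functional equation for $R(x,y)$ directly from the recursive structure of Schroeder trees and then solve it with the quadratic formula, exactly mirroring the derivation of $V(x,y)$ given earlier in this section. A Schroeder tree is either a single vertex, which is simultaneously the root and a leaf and therefore contributes $xy$, or it consists of a root together with an ordered sequence of $m \geq 2$ subtrees, each of which is itself a Schroeder tree; in the latter case the root is an internal vertex, so it contributes one factor of $y$ (one vertex) but no factor of $x$ (it is not a leaf). Summing over $m \geq 2$ gives
\begin{equation}R(x,y) = xy + y\sum_{m \geq 2} R(x,y)^m = xy + \frac{y\,R(x,y)^2}{1 - R(x,y)},\end{equation}
where the geometric series is summed as a formal power series; this is legitimate since $R(x,y)$ has vanishing constant term (every Schroeder tree has at least one leaf).

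Next I would clear the denominator: multiplying through by $1 - R(x,y)$ and collecting terms yields the quadratic
\begin{equation}(1+y)\,R(x,y)^2 - (1+xy)\,R(x,y) + xy = 0,\end{equation}
so that
\begin{equation}R(x,y) = \frac{(1+xy) \pm \sqrt{(1+xy)^2 - 4xy(1+y)}}{2(1+y)}.\end{equation}
Expanding the discriminant as $(1+xy)^2 - 4xy(1+y) = (xy)^2 + 2xy + 1 - 4xy(y+1)$ reproduces exactly the radicand in the claimed expression, so the remaining task is to select the correct branch. Evaluating the two candidates at $x = y = 0$, the $+$ branch gives $1 \neq 0$ while the $-$ branch gives $0$; since $R(x,y)$ must have zero constant term, we take the minus sign, which is precisely the asserted formula. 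I would also note that this is the same series as the $V(x,y)$ already computed in the section, since the combinatorial specification is identical, so the lemma could alternatively just be cited from that computation.

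The only subtle point I anticipate is the branch selection together with the formal-power-series interpretation of the geometric sum $\sum_{m\geq 2} R^m = R^2/(1-R)$; the algebra (clearing the denominator, applying the quadratic formula, simplifying the discriminant) is entirely routine. As a sanity check on the chosen branch, one can verify from the recursion that the coefficient of $x^1$ in $R(x,y)$ equals $y$, corresponding to the unique one-leaf, one-vertex tree.
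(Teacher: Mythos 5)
Your proposal is correct and matches the paper's approach: the paper proves this lemma simply by observing that $R(x,y)$ satisfies the same specification as the $V(x,y)$ computed earlier in the section, and your explicit derivation (functional equation $R = xy + yR^2/(1-R)$, quadratic formula, branch chosen by the vanishing constant term) is exactly that earlier computation written out. Your closing remark that the lemma could alternatively just be cited from the $V(x,y)$ calculation is in fact precisely what the paper does.
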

\begin{proof}
This is the bivariate generating function we got when trying to find the number of vertices and is calculated in exactly the same way.
\end{proof}
Now that we have that generating function $R_k(x)$ is simply the coefficent of $y^k$ in this expression which will be a generating function in $x$.
We can use the probabilities we already have, the generating functions, and Bender's lemma to calculate the probabilities and doing that we get.  It is relatively straight forward to check that the requirements of Bender's Lemma are satisifed.  $R_k(x)$ is our $A(x)$ and since it is a polynomial for all $k$ we know that it has an infinite radius of convergence and since $R_k(x)$ is also a polynomial with non-negative coefficents we also have that $R_k(b) \neq 0$ for all $k \neq 2$ in the case where $k=2$ the polynomial is identically $0$ so we cannot apply Benders Lemma, but it is unecessary since the number of subtrees of size $2$ is 0.  A tree cannot contain exactly two vertices.
\begin{center}
 \begin{tabular}{|c | c| c|} 
 \hline
k & $R_k(x)$ & Probability the subtree\\ &&has $k$ vertices $\approx$\\ [.5ex] 
 \hline
 1 & $x$ & .2929\\ 
 \hline
 2 & 0 & 0 \\
 \hline
3 & $x^2$ & 0.0503 \\
\hline
4 & $x^3$ & 0.0086\\
\hline
5 & $2x^3+x^4$ & 0.0187\\
\hline
6 & $5x^4+x^5$ & 0.0076\\
\hline
7 & $5x^4+9x^5+x^6$ & 0.0097\\
\hline
\end{tabular}
\end{center}
\subsection{$k$ leaves in the subtree}
We will see in many ways that htis is the much easier case to deal with and arguably the more natural way to define size $k$ subtrees  for Schroeder trees.  From the previous section we already  know that
\begin{lemma}
The generating function for the number of vertices which have $k$ leaves in their subtrees is \begin{equation}(L_k(x))\dfrac{3-x+\sqrt{1-6x+x^2}}{4\sqrt{1-6x+x^2}},\end{equation} where $L_k(x)$ is the generating function for the number of trees with $k$ leaves.
\end{lemma}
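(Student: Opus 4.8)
The plan is to recycle, almost verbatim, the decomposition from the previous subsection, changing only the marked statistic from ``$k$ vertices in the subtree'' to ``$k$ leaves in the subtree.'' Write $T_k(x)$ for the generating function counting, across all Schroeder trees (with $x$ marking leaves), the vertices that have $k$ leaves in their subtree. The point is that, for any vertex $v$, the number of leaves in the subtree rooted at $v$ is determined by that subtree alone, exactly as the number of vertices is. Hence, removing the root (which, having at least two children, contributes no leaf) and regarding a Schroeder tree as a forest of $m \ge 2$ Schroeder subtrees, then marking a vertex with $k$ leaves in its subtree in one of the $m$ slots, gives the very same linear recurrence as before, with only the root term changed:
\begin{equation}
T_k(x) = L_k(x) + 2T_k(x)S(x) + 3T_k(x)(S(x))^2 + 4T_k(x)(S(x))^3 + \cdots .
\end{equation}

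Next I would sum the series, using $\sum_{m \ge 2} m\,(S(x))^{m-1} = (1-S(x))^{-2} - 1$, to turn this into a linear equation in $T_k(x)$, solve for $T_k(x)$, and substitute the closed form $S(x) = \frac{2x}{1+x+\sqrt{1-6x+x^2}}$ — equivalently $S(x) = \frac{1+x-\sqrt{1-6x+x^2}}{4}$ — from the earlier theorem. This is precisely the simplification already performed in the $k$-vertices subsection, and it collapses to the factor $\frac{3-x+\sqrt{1-6x+x^2}}{4\sqrt{1-6x+x^2}}$; multiplying through by $L_k(x)$ yields the claimed formula. I would also remark that $L_k(x)$ is particularly transparent here: since Schroeder trees are enumerated by leaves, a tree with $k$ leaves contributes $x^k$, so $L_k(x) = s_k x^k$ with $s_k = [x^k]S(x)$, and no bivariate generating function is required — this is the sense in which the leaf version is the easier one.

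The only step that genuinely needs an argument — and the closest thing to an obstacle — is verifying that replacing the marked statistic leaves the forest decomposition intact, i.e., that ``leaves in the subtree of the marked vertex'' behaves under root removal just as ``vertices in the subtree of the marked vertex'' did. This is immediate, because in both cases the relevant statistic of the marked vertex depends only on its own subtree, and that subtree is carried over unchanged when the root is deleted and the tree splits into its forest of principal subtrees. Consequently the recurrence, and hence the multiplier relating $T_k(x)$ to the root generating function, is literally the same as in the previous lemma, and everything downstream of writing the recurrence is routine.
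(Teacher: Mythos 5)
Your proposal is correct and follows the paper's own route: the paper proves this lemma simply by noting that the forest decomposition and recurrence from the $k$-vertices subsection carry over verbatim with the root term replaced by $L_k(x)=s_kx^k$, which is exactly what you do. One point in your favor: your closed form $\sum_{m\ge 2} m\,(S(x))^{m-1}=(1-S(x))^{-2}-1$ is the correct one and does produce the stated multiplier $\frac{3-x+\sqrt{1-6x+x^2}}{4\sqrt{1-6x+x^2}}$, whereas the intermediate display $\frac{S(x)}{(1-S(x))^2}$ in the paper's earlier proof is a slip that would not.
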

This is exactly as we had in the previous situation and the proof is exactly the same.  The only thing left to do is calculate $L_k(x)$ but that is just $s_k x^k$ since the number of trees where the root has $k$ leaves is just $s_k$.  From that we can generate the following table of probabilities.
\begin{center}
 \begin{tabular}{|c | c| c|} 
 \hline
k & $L_k(x)$ & Probability the subtree\\&& has $k $ leaves $\approx$\\ [.5ex] 
\hline
 1 & $x$ & 0.2929\\ 
 2 & $x^2$ & 0.0503 \\
3 & $3x^3$ & 0.0259 \\
4 & $11x^4$ & 0.0163\\
5 & $45x^5$ & 0.0114\\
6 & $197x^6$ & 0.0086\\
7 & $903x^7$ & 0.0067\\
\hline
\end{tabular}
\end{center}
\section{Further Directions}
The technique used in this paper to calculate the probability a vertex has a subtree of size $k$ very easily generalizes to other types of trees.  In the case of subtrees having $k$ vertices we merely need to find the generating function for a vertex having some property and the generating function for the number of trees where the root has a size $k$ subtree.  This makes it incredibly easy to deal with trees counted by the number of vertices because the generating function for the number of roots with $k$ vertices in their subtree is merely $T_k x^k$.  Similarly for trees counted by the number of leaves it would be equally simple to find the proportion of vertices with $k$ leaves in their subtree.
\section{Open questions}
If we look at Motzkin Trees again we see that the probability a vertex has $k$ vertices in its subtrree is merely $M_k(\frac{1}{3})^k$ and if we sum this over all $k$ we get 1.  So we say that $k$ vertices in the subtree is a tight statistic for Motzkin trees.  We also know that there exist classes of trees for which the statistic is not tight the most obvious being the class of trees for which each parent is allowed only 1 child.  In such a tree the asymptotic probability that an arbitrary vertex has a size $k$ subtree is 0 for all $k$ as the number of vertices goes to $\infty$.  Two interesting questions we could ask are is this stastic tight for Schroeder trees and the other trees studied in this paper and in general is there any way to classify trees where this statistic is tight.  Also we could ask are there any non degenerate tree families for which this statistic is not tight.
\vspace {.1 in}
\newline
For both Motzkin Trees and ordered trees the probability a vertex has $k$ vertices in its subtree is a strictly decreasing sequence in $k$.  For full binary trees we see a sequence of 0 and a strictly decreasing sequence, this makes sense as a subtree can never have an even number of vertices.  The sequence for Schroeder trees is more interesting.  It is not strictly decreasing and it is not the merging of two strictly decreasing sequences, the 2 vertex example gets in the way.  However, it does seem to oscilate and other than that single anamoly with 2 vertices it seems to be two strictly decreasing sequences.  It would be interesting to see if this trend holds for larger values of $k$ or is there some point where it becomes strictly decreasing or perhaps there is no rhythm to the sequence.

\end{document}